\documentclass[pdflatex,sn-mathphys-num]{sn-jnl}% Math and Physical Sciences Numbered Reference Style
\usepackage{mathrsfs}%
\usepackage[title]{appendix}%
\usepackage{xcolor}%
\usepackage{textcomp}%
\usepackage{manyfoot}%
\usepackage{booktabs}%
\usepackage{algorithm}%
\usepackage{algorithmicx}%
\usepackage{algpseudocode}%
\usepackage{listings}%
\usepackage{amsmath,amsthm,amssymb,mathrsfs,bm}

\usepackage{graphicx} % figures removed but package harmless
\usepackage{geometry}
\usepackage{hyperref}
\usepackage{stmaryrd}
\usepackage{float}
\usepackage{multirow}
\usepackage{tikz} % kept though not used for images

\theoremstyle{thmstyleone}%
\newtheorem{theorem}{Theorem}
\theoremstyle{thmstyletwo}%
\newtheorem{remark}{Remark}%
\theoremstyle{thmstylethree}%

\newcommand{\bigslant}[2]{{\raisebox{.2em}{$#1$}\left/\raisebox{-.2em}{$#2$}\right.}}
\newcommand{\R}{\mathbb{R}}
\newcommand{\RP}{\mathbb{R}\mathbb{P}}
\newcommand{\C}{\mathbb{C}}
\newcommand{\p}{{\mathbb{P}}}
\newcommand{\K}{{\mathbb{K}}}
\newcommand\blfootnote[1]{
    \begingroup
    \renewcommand\thefootnote{}\footnote{#1}
    \addtocounter{footnote}{-1}
    \endgroup
}
\begin{document}

\title{Introduction to non-Abelian Patchworking}
\author{Turgay Akyar and Mikhail Shkolnikov}

%\author*[1,2]{\fnm{Mikhail} \sur{Shkolnikov}}\email{iauthor@gmail.com}

%\author[2,3]{\fnm{Turgay} \sur{Akyar}}\email{iiauthor@gmail.com}
%\equalcont{These authors contributed equally to this work.}

%\author[1,2]{\fnm{Third} \sur{Author}}\email{iiiauthor@gmail.com}
%\equalcont{These authors contributed equally to this work.}

%\affil*[1]{\orgdiv{Department}, \orgname{Organization}, \orgaddress{\street{Street}, \city{City}, \postcode{100190}, \state{State}, \country{Country}}}

%\affil[2]{\orgdiv{Department}, \orgname{Organization}, \orgaddress{\street{Street}, \city{City}, \postcode{10587}, \state{State}, \country{Country}}}

%\affil[3]{\orgdiv{Department}, \orgname{Organization}, \orgaddress{\street{Street}, \city{City}, \postcode{610101}, \state{State}, \country{Country}}}

%%==================================%%
%% Sample for unstructured abstract %%
%%==================================%%

%%\abstract{The note introduces a novel concept of Non-Abelian patchworking which is a logical real algebraic geometry spin-off of the developing theory of Non-Abelian (previously referred to as Non-Commutative) Amoebas and their tropical limits. The main focus is made on proposing a new method for constructing topological types of surfaces in real projective 3-space, seen as the projectivization of a four dimensional vector space of two-by-two matrices.}

\abstract{The note introduces a novel concept of non-Abelian patchworking arising as real locus of non-Abelian complex-phase tropical hypersurfaces, the theory of which is now developed enough to allow the proposed spin-off. Although, non-Abelian Tropical Geometry makes sense for an arbitrary reductive complex group, the state of the art is that of full understanding of tropicalizations of surfaces within three dimensional groups $PGL_2(\mathbb{C})$ and $SL_2(\mathbb{C}),$ which are closely related via the two-fold covering. We stress our point, that this is an announcement of a framework, taking care of explaining explicitly the input, which is more geometric and less combinatorial than in the original Viro's method, to construct possible types of real algebraic surfaces in the real projective 3-space, and verify that it reproduces all the existing isotopy types of surfaces up to degree three. We obtain two general theorems concerning the topology of primitive PGL2 surfaces, observing in particular that they may have different Euler charteristic for a fixed degree greater than one, not necessarily equal to the signature of the corresponding complex surface, which would be the case for primitive combinatorial patchworking due to a result of Itenberg.}

\keywords{patchworking, PGL2, tropical geometry, real algebraic surfaces.}

%%\pacs[JEL Classification]{D8, H51}

%%\pacs[MSC Classification]{35A01, 65L10, 65L12, 65L20, 65L70}

\maketitle

\section{Introduction}

The study of the topology of real algebraic varieties is a very old and complicated subject dating back to the antiquity, with the first non-trivial instance being the classification of conic curves in the plane. As is universally known, the name “conic” refers to conic sections, i.e. the intersections of a plane with a cone given by $z^2= x^2+y^2$ for $(x,y,z)\in\mathbb{R}^3$, which is an example of a singular affine quadric surface. Thus, two terms “conic” and “quadratic curve” are almost synonyms: every (affine) conic, which can be an ellipse, a hyperbola, or a parabola, in the non-singular case, and a union of two intersecting lines, or a single point in the singular case, is a quadratic curve. One missing case is an ``empty conic'' given for instance by $x^2+y^2=-1,$ and that of a pair of parallel lines in the real affine plane $\mathbb{R}^2$. 
\blfootnote{The order of the authors' names is alphabetical.}

Nevertheless, the two lines can be thought to be intersecting at infinity. This, in part, motivates compactification of the affine plane $\R^2$ to the projective plane $\RP^2$ by adding all such ideal points of intersection at infinity, ensuring the property that every two lines intersect. From this perspective, all smooth non-empty conics are now same up to projective equivalence given by the action of an  eight-dimensional group $PGL_3(\R)$, a projectivization, i.e. the quotient by rescaling, of the group $GL_3(\R)$ of $3\times 3$ invertible matrices. Such matrices are the linear automorphisms of $\R ^3$ seen as a vector space, and $\RP^3$ is realized as projectivization of $\R^3$. The original $\R^2$ may be viewed as the image of a plane $z=1$ under the projection $\R^3\longrightarrow\frac{\R^3\setminus \{0\}}{\R\setminus\{0\}}=\RP^2.$ Practically, this means that every polynomial equation $\displaystyle\sum_{k+e\leq d}^{} a_{k,e}x^ky^e=0$ of degree $d$ can be homogenized to$$\displaystyle\sum_{k+e\leq d} a_{k,e}x^ky^ez^{d-k-e}=0,$$ where every monomial now is of total degree $d$.   The homogeneous equation of degree $d>1$ necessarily defines a singular surface (with the singularity at the origin) which is invariant under the rescaling, and they can be projectivized to a curve in $\RP^2$. This curve is called singular if there exist $p\in\R^3\setminus\{0\}$ such that $\partial_xf(p)=\partial_yf(p)=\partial_zf(p)=0$,
where $f$ is the homogenized defining polynomial for the projective curve under consideration. Note that by Euler's identity
$d\,f=\partial_xf+\partial_yf+\partial_zf$, the point $p$ projects to the (singular) point of the curve in $\RP^2$. 
As it was mentioned, the smooth projective conics are all equivalent to each other, and what it makes look them different is their relative positions with respect to the line at infinity defined by the equation $z = 0$; hyperbolas intersect this line twice, parabolas are tangent to this line, intersecting it at a single point, and ellipses have no points at infinity. Topologically, a smooth projective conic is a circle, whose complement in $\RP^2$ consists of a disjoint union of an open disc (the interior), and of an open Möbius band (the exterior). Smooth degree 3 projective curves, also known as cubics, admit two different topological types consisting of either a single connected component, which is isotopic to a line, i.e. being its continuous  deformation, or of two components, being isotopic to a disjoint union of a line and an ``oval'', with the latter term referring to an embedded circle in $\mathbb{R}P^2$ isotopic to a smooth non-empty projective conic. To construct such curves, simply take union of a smooth (either empty or non-empty) conic and a line.

Quartic curves may have at most four connected components, each of which is an oval. Moreover, if one such component belongs to the interior of another, the quartic has no other components. To see this, assume the contrary, and draw a line through a point in the intersection of interiors of the two nested ovals and a point in the interior of another oval. This line would intersect each of these three ovals at at least two points resulting in six or more intersections, which is impossible by a simple instance of the Bézout's theorem --since the restriction of the defining degree four polynomial to the line (which in our smooth case is not contained in the quartic) can have at most four roots.

Similarly, one can easily work out the degree five case. Each smooth projective quintic is isotopic to a union of a line (with corresponding component traditionally called a “pseudo-line”) and at most six ovals. Moreover, if there is a pair of nested ovals, the total number of components must be 3, which follows from the same B\'ezout-type prohibition argument. 

In general, the total number of components of a degree $d$ projective curve is majorated by $$\dfrac{(d-1)(d-2)}{2}+1,$$ which is Harnack's inequality, who in addition proved that this bound is sharp in every degree. The above bound is elegantly explained by Klein's argument, relating, more generally, the genus (i.e. the number of handles) of the complexification (i.e. the complex set of solutions of the same equation, which is a topological surface) to the number of components of the real locus by computing the Euler characteristic of the quotient of the complexification with respect to the involution given by coordinate-wise conjugation. For example, both degree $1$ and $2$ define a complex curve of genus $0$, i.e. a topological $2$-sphere, which corresponds to a single real component. In degree $3$ case, smooth complex projective cubic topologically is a torus, a surface of genus $1$, thus we may have at most $2$ components in the real locus. In degree $4$, the smooth complex solution set is a surface of genus $3$, which gives at most $4$ components, and in degree $5$, the genus is $6$, so we have at most $7$ components, including the pseudo-line.

The first part of the Hilbert's XVI problem was concerned with possible relative positions of ovals of a sextic, i.e. degree $6,$ curve with the maximal number of components, which is $11$. Harnack's general geometric construction provided a configuration with one nest and 9 isolated ovals. In addition, Hilbert gave a similar construction in even degree case, which after being specialized to sextics, gave an ``opposite'' configuration with one isolated oval and an oval encompassing nine other ones. He further conjectured that no other topological configuration exists. After a long struggle, a third type was constructed by Gudkov decades later, which is somewhat in between Hilbert's and Harnack's configurations, with one oval encompassing five ovals, and the five remaining ones being outside. This fascinating story is beautifully described in an essay of Oleg Viro \cite{Viro08}. Nowadays, the problem of relative position of ovals of a smooth projective curve is known up to degree $7$, using various sophisticated prohibition and construction methods. Currently, the most potent and universal construction is Viro's combinatorial patchworking. 

This method is indeed quite combinatorial and seemingly unrelated to the continuous nature of the problem at hand. It operates at the level of the Newton polygon of the curve, which in the projective plane case is a triangle with vertices $(0, 0), (d, 0)$ and $(0, d)$, as well as its lattice triangulation, where every node has integer coordinates, and an arbitrary distribution of signs at them. Using this datum, one constructs a piecewise linear curve in the union of four reflections (with respect to the coordinate lines) of the Newton polygon, which form together a centrally symmetric diamond shape. After identifying the opposite points on its boundary, one gets $\RP^2$ topologically, and a configuration of embedded circles. The main theorem about combinatorial patchworking is that this configuration is realizable by some real algebraic curve of degree $d,$ provided that the triangulation satisfies a technical ``convexity'' assumption, which is actually not known to be necessary. In modern terms, Viro's patchworking is the topological type of a tropical curve enhanced with a real phase structure, and the theorem is that the topological type of the real curve near the tropical limit is restored in this way. 

The generality of the Viro's method is much more than that of curves in $\RP^2$. On the one hand, one can work with general smooth toric surfaces. On the other hand, we may increase the dimension arbitrarily and be concerned with hypersurfaces in higher-dimensional toric varieties, and even more generally, with tropical complete intersections therein, see \cite{RRS25} for the most recent development of patchworks in higher codimension via real phase structures.

An important conceptual remark here is that one could also have a similar
topology restoration result in the complex case, which was conjectured by Viro. Namely, theorems of Kerr and Zharkov \cite{KerrZharkov18}, or Kim and Nisse \cite{KimNisse21}, state that complex phase enhancement of a smooth tropical hypersurface recovers the topology of the corresponding complex hypersurface. The notion of tropical smoothness, at the level of the dual subdivision of a Newton polyhedron reads as requiring the subdivision to be a triangulation with every full dimensional sub-simplex is primitive, having the minimal possible volume, i.e. $\frac{1}{2}$ in the two dimensional use, and $\frac{1}{6}$ in 3 dimensions. This condition ensures that the variety near the tropical limit is smooth manifold.

Patchworking with this tropical-smoothness condition is referred to as ``primitive''. Primitive patchworking, being still quite powerful, has nevertheless a strong limitation for constructing topological types of even degree hypersurfaces. For instance, Itenberg \cite{Itenberg97} observed that the Euler characteristic of the primitive patchworking in $\RP^3$ depends only on the degree, at first glance, rather coincidentally being equal to the signature of the smooth complex surface of corresponding degree in $\C\p^3$ seen as a topological four manifold. Similar result holds in higher dimensions, as it was established by Bertrand \cite{Bertrand10} using the same technique of counting numbers of simplices of various dimensions. Nowadays, the conceptual explanation is provided by Renaudineau-Shaw spectral sequence \cite{RenaudineauShaw23} realizing the homology of the real phase tropical variety as the limit of the (bi-graded) tropical homology \cite{IKMZ19} of the smooth tropical variety.

The status of the adaptation of Hilbert's question for higher dimensional varieties is rather unsatisfactory presently. On the one hand, Harnack's inequality appropriately generalizes to the Smith-Thom inequality, which  also can be achieved by a spectral sequence argument, giving the sharp upper bound on the sum of all Betti members of the real locus in terms of the corresponding quantity of the complexification. The individual Betti members, such as the $0$-th one counting the number of connected components, also admit some reasonable bounds in terms of Hodge numbers of complexification, but they are not known to be sharp. In particular, the known best upper bound on the number of connected components of a real quintic surface in $\RP^3$ is $25$, but the highest number of component construction was done by Orevkov a quarter of a century ago giving an example with $23$ components \cite{Orevkov01}. In other words, we still don't know if there exist projective quintics with $24$ on $25$ connected components. Similarly, for the first Betti number, i.e. the number of independent loops, there have been some progress (see for instance \cite{Renaudineau15}), but the question is far from being resolved.

The situation is even more complicated if one wants to know the relative position of these components especially in higher codimension, such as for real algebraic knots, see for instance \cite{B11,MO19}, understanding of isotopy types is far from being complete. New prohibitions and constructions are required. What we propose in this note, is a conceptually new construction method inspired by Viro's original idea, but put
in a different framework of non-Abelian tropicalization \cite{MS22} and its phase version \cite{SP24}, with its novel rendition in the real-phase case. Applied to the group $PGL_2(\C)$, this gives a conjectural method of constructing surfaces in $\RP^3$, which is rather different in flavor to the classical one, being much less combinatorial, and reducing the three-dimensional geometry to the problem of relative position of pairs of smooth curves on ellipsoids and hyperboloids.

\section{$PSL_2(\C)$-tropical surfaces}
This section is dedicated to outlining the construction of $PSL_2$ phase tropical surface subject to smoothness condition and in the complex coefficient case. The corresponding real counterpart will arise by asking the data of the construction to be invariant with respect to some proffered anti-holomorphic involution (a.k.a. “real structure”) on $PSL_2(\mathbb{C})$ and by taking the corresponding fixed point locus.

One way to approach the notion of $PSL_2$-tropical surface is by looking at the closure of the image of a surface in $PSL_2(\mathbb{K})$ under the map $$\operatorname{VAL}: PSL_2(\K)\rightarrow PSL_2(\C),$$ where $\K$ is an algebraically closed non-Archimedean field with the residue field $\C$. The above map may be thought of as “phase tropicalization of a family of points”, the explicit formula, utilizing the notion of matrix adjugate, is: $$\operatorname{VAL}[t^\alpha B+o(t^\alpha)]_{\K^*}=[e^\alpha B+e^{-\alpha}(B^*)^{adj}]_{\C^*}.$$
Description of the structure of the images of surfaces under this map was conjectured in \cite{SP25}, with the conjecture being recently confirmed in \cite{B-LS26}, utilizing the key diffeomorphism of $PSL_2(\C)\setminus PSU(2)$ and $(0,\infty)\times\mathcal{S},$ where $\mathcal{S}$ is the circle bundle over the projective quadric surface $Q_2(\C)\subset\C\p^3=\p_{\C}(\operatorname{Mat}_{2\times 2}\C)$ given by $\{det=0\}$. A fiber of $\mathcal{S}$ over the point $[B]_{\C^*}\in Q_2(\C)$ is given by $$\{[cB]_{\R^*}|c\in U(1)\subset {\C}^*\}.$$
Concretely, the diffeomorphism is 
\begin{align*}
	\psi: (0,\infty)\times\mathcal{S}&\longrightarrow PSL_2(\C)\setminus PSU(2)\\
	(\alpha,[B]_{\R^*})&\longmapsto [e^\alpha B+e^{-\alpha}(B^*)^{adj}]_{\C^*},
\end{align*}

Thus, under this identification, it is convenient to describe the images of surfaces, we will do it in what can be referred to as the “primitive” case and be treating the old and even degrees separately.
\subsection{Primitive surface diagrams of even degree}

Let $d=2k$ be the even degree, and $0<\gamma_1<... <\gamma_k<+\infty$ to be the “critical levels”. Let $f_0,f_2,...,f_{2k}$ be complex polynomials in the entries of $2\times2$ matrices, such that $f_j$ is homogeneous of degree $j$. Denote by $\overline{f_j}$ the restriction of this polynomial to the quadric surface $Q_2(\C)$ - it is a bi-homogeneous polynomial of bi-degree $(j, j)$ defining a curve $C_j\subset Q_2(\C)$. In addition to having the maximal number of critical levels, which is $k =\dfrac{d}{2}$, in our situations, the primitivity assumption consists of smoothness of all $C_j$, as well as transversality of $C_j$ and  $C_{j+2}$. The patchworking polynomial with this data is defined as $F_d(B)=\displaystyle\sum_{j=0}^{k}(det(B))^{2k-2j}f_j(B)t^{\beta_j}$, where $\beta_j$'s are such that the classical one variable tropical polynomial of degree $d$ $$T\longrightarrow \displaystyle\max_{j=0,...,k} (\beta_j + 2jT)$$ has the roots  at $\gamma_1,...,\gamma_k$, each with multiplicity 2. Then, the surface $S_d\subset PSL_2(\K)$ given by $F_d=0$, has the following image under $\operatorname{VAL}$:
$$\bigg(\bigsqcup_{j=1}^k(\gamma_j,\gamma_{j+1})\times \mathcal{S}|_{C_j}\bigg)\bigsqcup\bigg(\bigsqcup_{j=1}^k\{\gamma_j\}\times Im(\sigma_j)\bigg)$$
where $\gamma_{k+1}=+\infty$, and $\mathcal{S}|_{C_j}$ is the total space of the restriction to $C_j$ of the circle bundle $\mathcal{S}$ and $Im(\sigma_j)$ is the image of a section $\sigma_j$ of $\mathcal{S}$ defined away from $Cj\cup C_{j+1}$ as 
$$\sigma_j[B]_{\C^*}=\displaystyle\bigg[\sqrt{\dfrac{f_j(B)}{f_{j+2}(B)}}B\bigg]_{\R^*}.$$ The closure in $\C\p^3$ of $\operatorname{VAL}(S_d)$ is called a primitive even degree $d=2k$, $PSL_2(\C)$-phase tropical diagram. It is not hard to see, that this diagram is a topological $4$-manifold. It was conjectured by Ilia Zharkov that this manifold is homeomorphic to a degree $d$ smooth complex surface in $\C\p^3$.

\subsection{Primitive surface diagrams of odd degree}
The main structural difference of the odd degree is that
now all levels are occupied, with the contribution at $\{0\}\times PSU(2)$ being some spherical coamoeba, i.e. the image of an algebraic surface in $PSL_2(\C)$ under the projection $$\varkappa^o: PSL_2(\C)\longrightarrow PSU (2)$$ given by taking the unitary part in the polar decomposition: $$\varkappa^o[A]_{\C^*} = [A + (A^*)^{adj}]_{\R^*} \in PSU(2).$$ In the primitive case, the requirement is that this coamoeba is that of a plane. Such coamoebas can be fully described, with the generic one being the exterior of some quadratic cone in $PSU(2)\simeq \RP^3$:

This contribution complicates the situation, it is not so evident that the junction of the closure of this coameoba with $(0, \gamma_1) \times C_1$ gives a manifold. Nevertheless, 
as communicated by Ilia Zharkov, that it is plausible, and the overall topology of the odd degree $PSL_2$ diagram is again that of a complex surface. In application to the real case, it seems, one needs to provide a special care for the above coamoeba, especially since we are dealing with three different real structures.

\section{Real Structures on $PGL_2(\C)$}
The general idea of our construction is that we will be fixing some real structure, i.e. an anti-holomorphic involution, on $PGL_2(\C)$, that is $I: PGL_2(\C) \longrightarrow PGL_2(\C)$, and be looking at the $I$-real points of the complex phase diagram of the previous section $D$ such that the involved homogeneous polynomials $f_j$ are invariant with respect to $I$, i.e. $f_j\circ I=f_j$, i.e. at $D^I=\{p\in D|  I(p)=p\}$.

The main conjecture we are putting forward, which will be fully proven in the sequel article, is that provided $D$ was primitive, $\overline{D^I}\subset\overline{PGL_2(\R)}\simeq \RP^3$ is realized as an isotopy type of a smooth real algebraic surface of corresponding degree (in addition, one may note that the real part of the complex surface degenerating to $D$ for large t, is delivering precisely an example of such a surface). To this end, for the preferred list of three real structures that will be thoroughly discussed below, we've been able to verify the above conjecture up to degree $3$. Moreover, what we see that all the possible isotopy types of real algebraic surfaces are realizable by our construction. The situation in degree $4$, where the complete classification
is known classically is the subject of our current investigation. We expect to see most, if not all, of the types to be realizable by the present technique. In addition, one may hope, provided that the main conjecture holds, that some previously
unknown types of quintic surfaces will be produced.

\subsection{Real structure $I_{T^2}$}
One obvious instance of a real structure on $PGL_2(\C)$ is given by basic complex conjugation of each entry of a matrix:
$$I_{T^2}\begin{bmatrix}
	a & b\\
	c & d
\end{bmatrix}_{\C^*}=
\begin{bmatrix}
	\overline{a} & \overline{b}\\
	\overline{c}& \overline{d}
\end{bmatrix}_{\C^*}.$$

The subindex $T^2=S^1\times S^1$ refers to the topology of the real locus of the quadric $Q_2(\C)\simeq \C\p^1\times \C\p^1$ which in this situation is simply $\R\p^1\times \R\p^1$; a one sheeted hyperboloid homeomorphic to the two-dimensional topological torus $T^2$.

This real structure is, in fact, a group automorphism of $PGL_2(\C)$, thus its fixed
point set must carry a group structure. Of course, this group is, quite evidently, $PGL_2(\R).$
It is not connected topologically, with the component group being isomorphic to $\{\pm 1\}$ which corresponds to the sign of the determinant (note that for projectivized 2-by-2 real matrices this sign is well-defined). The maximal connected subgroup of $PGL_2(\R)$  is $PSL_2(\R).$ The topology of the latter is easy to describe: $PSL_2(\R)\simeq S^1\times D^2$,  where $S^1$ arises as a maximal compact subgroup $PSO(2)$, and $$D^2\simeq \mathbb{H}^2 \simeq {PSL_2(\R)}\slash{PSO(2)}$$ carries the natural structure of a hyperbolic
plane. In other words, $PSL_2(\R)$ is seen as an open solid torus topologically, and $PGL_2(\R)$ is the union of two such tori.

We compactify $PGL_2(\R)$ to $\R\p^3$ -- the projectivization of the space of real two-by-two matrices is thus decomposed into the union of two solid
tori joining at the real part, with respect to $I_{T^2}$, of the quadric surface, i.e. the torus. In this way, we have recovered the familiar genus-1 Heegaard  decomposition of the real projective 3-space thought of as a topological 3-dimensional manifold. 

Recall that the datum of genus $g$ Heegaard decomposition consist of an element in the mapping class group of a genus $g$ surface which is used to glue two copies of a solid genus $g$ handle-bodies along their boundary. In the situation above, $g=1,$ and the mapping class group is $SL_2(\mathbb{Z}).$ The mapping class group element can be found from the basic geometry of the projective $3$-space. Namely, the boundary torus of the decomposition seen as a hyperboloid intersects a real plane $\mathbb{R}\p^2$ either in a curve isotopic to a diagonal or an anti-diagonal, if we fix an identification of the hyperboloid with $\mathbb{R}\p^1\times\mathbb{R}\p^1.$ In both cases, this curve is a non-empty smooth conic on the projective plane, and thus cuts it into a disc and a M\"obius band, which switch the roles if we exchange diagonal and anti-diagonal. Thus, what we know is that $(1,1)$ curve maps to $(0,1)$ and $(1,2)$ curves on the boundary of the two solid tori. Combining this information with a similar assignment for the anti-diagonal, and taking into account that $(1,1)$ and $(-1,1)$ form a basis of the rational first homology of the torus, we get the mapping class group element.

%One may draw this, rather schematically as follows: where the $T^2$ is represented by the square with the opposite sides being glued, and the lines at the left and right extremities correspond to the non-trivial core cycles of the corresponding solid tori, i.e. $S^1\times\{0\}\subset S^1\times D^2$, where $0\in D^2$ is the center of the disc.

\subsection{Real structure $I_{\emptyset}$}
Another real structure, which is again a group automorphism of $PGL_2(\R)$, is given by
$I_{\emptyset}[A]_{\C^*}=[(A^*)^{-1}]_{\C^*}.$ Its real locus is $PSU(2)$, which doesn't intersect the quadric $Q_2(\C)$ at all, thus the index $\emptyset$ corresponding to the real locus of the quadric surface being empty.

This real structure is almost useless for our purposes: in the case of even degree
primitive diagram $D\in PGL_2(\C)$, its real locus with respect to $I_\emptyset$ is empty, as there is no spherical coamoeba contribution; and in the odd degree case we would see just the coamoeba of a $I_\emptyset$-real plane,
which is a plane in $\RP^3\simeq PSU(2)$.

\subsection{Real structure $I_{S^2}$}
The third real structure we will be concerned with is $I_{S^2}: PGL_2(\C) \longrightarrow PGL_2(\C)$ given by $I_{S^2}[A]_{\C^*} = [A^*]_{\C^*}$. Again, the index $S^2$ corresponds to the topology of the real locus of the quadric surface $Q_2(\C)$ with respect to this real structure. Importantly, $I_{S^2}$ is not a group automorphism of $PGL_2(\C)$ (but an anti-automorphism), so its real locus cannot be expected to be a subgroup. Quite evidently, this real locus, after compactification, is $\RP^3$ realized as the projectivization of the space of Hermitian 2-by-2 matrices. The real locus of the quadric, i.e. $S^2$ cuts this projective space into two non-homeomorphic connected components, one being an open 3-ball, which may be identified with $\mathbb{H}^3$, and with the other component being non-contractible. Matrix-wise, the components correspond to definite and indefinite non-degenerate Hermitian matrices -- again, these two cases are distinguidhed by the sign of the determinant. We may represent this decomposition as in Figure \ref{fig:I_{S^2}-real}.

\begin{figure}[H]
    \centering
    \includegraphics[width=0.8\linewidth]{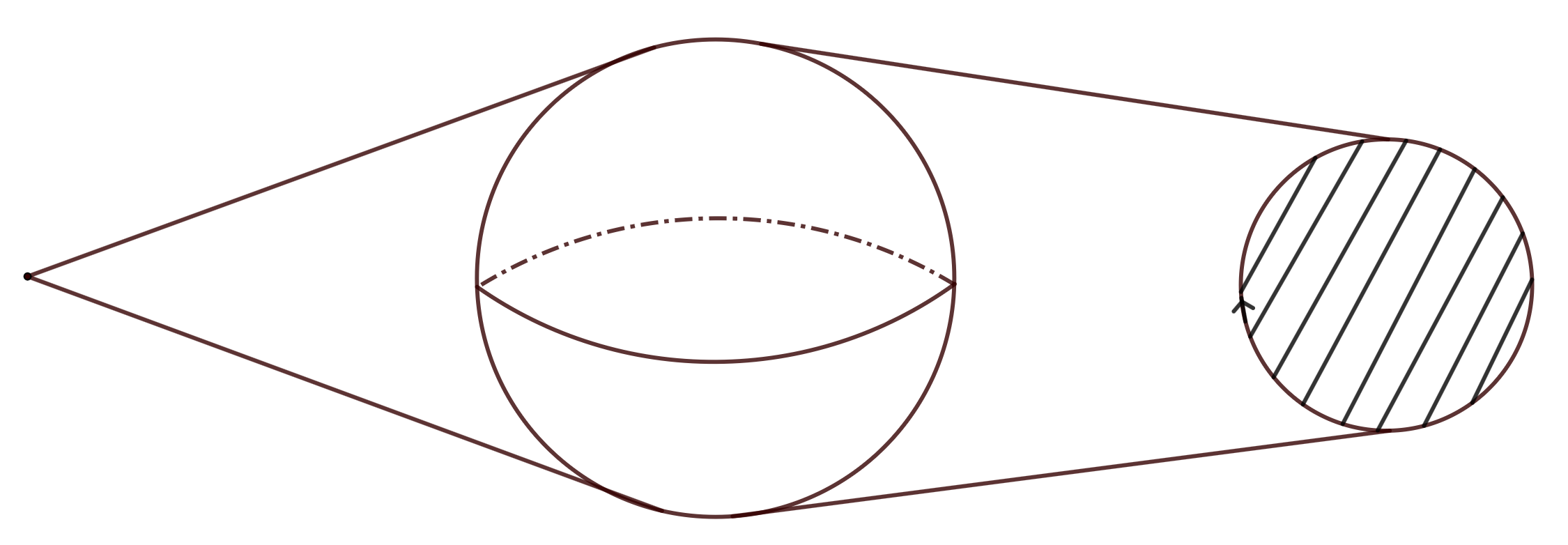}
    \caption{A schematic representation of the real locus of $PGL_2(\C)$ with respect to $I_{S^2}.$}
    \label{fig:I_{S^2}-real}
\end{figure}
Here, the right “cone” ends with $\RP^2$, i.e. the indefinite matrices are seen  as $$\bigslant{[0,1]\times S^2}{(1,p)\sim(1,-p)},$$
and the left cone ends at a point, the base, i.e. the level in the middle represents $S^2.$

\section{Topology of real $PGL_2(\C)$-phase diagrams}

In this section, we study the topology of $D^I$, for $I\in\{I_{T^2},I_\emptyset,I_{S^2}\}$.  For this purpose, we need to consider separately the real loci arising from each of the real structures introduced in the previous section.

We begin by recalling the initial data. Note that the real part of $\mathcal{S}$ is a real $S^0-$bundle over $Q(\mathbb{R})$; in other words, the fiber over a point $[B]_{\C^*}$ consists of two real points, namely $[B]_{\R^*}$ and $[iB]_{\R^*}$.

For each critical level, together with a choice of bihomogeneous polynomials $f_j$ and $f_{j+2}$, the section $\sigma_j$ uniquely determines a sign on each connected component of the exterior of $C_j\cup C_{j+2}$ at that critical level depending on the sign of the ratio $f_j/f_{j+2}$, which is a well-defined notion on $\RP^1\times \RP^1$. After taking the closure, this produces an additional critical level obtained by interchanging the preassigned signs inside the other connected component of $PGL_2(\mathbb{R})$ in a way that $\displaystyle\bigg[\sqrt{\dfrac{f_j(B)}{f_{j+2}(B)}}B\bigg]_{\R^*}$ corresponds.

\begin{figure}[H]
    \centering
    \includegraphics[width=0.9\linewidth]{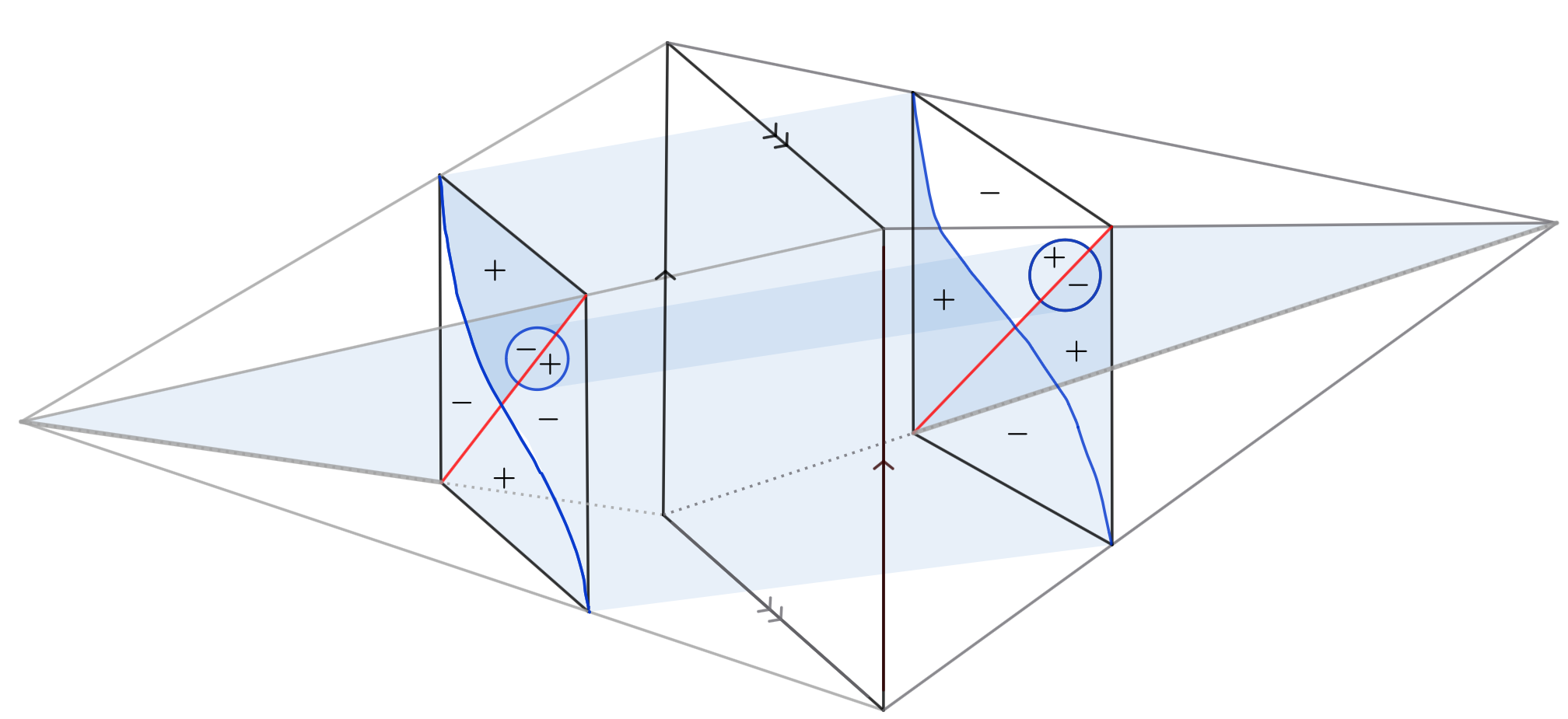}
    \caption{This diagram represents $D^{I_{T^2}}$ for degree 3 in a way that both ends are shown as points rather than a point and an $S^1$. In this particular example, we have a $(1,1)$ curve (red) and a (3,3) curve (blue) at the critical level, intersecting at four points, together with a choice of signs on each connected component of the complement of the union of the curves.}
    \label{fig:d=3_example}
\end{figure}

\begin{theorem}\label{theorem_ec}
	Let $X\subset\C\p^3$ be a complex smooth surface of degree $d$ and $D$ be the $PGL_2(\mathbb{C})$-phase diagram of degree $d$, obtained with the initial data described above. If $D$ is ${I_{T^2}}$-real and $d=2k+1,$ then the Euler characteristic is bound to
	$$\chi(\overline{D^{I_{T^2}}})\in[1,\sigma(X)]\cap (2\mathbb{Z}+1)$$
	%(-8k^3-12k^2+2k+3)/3
	In the even case and $D$ being real ${I_{T^2}}$-real, where $d=2k$,
	$$\chi(\overline{D^{I_{T^2}}})\in[0,\sigma(X)]\cap 2\mathbb{Z}.$$
	%(-8k^3-12k^2+2k+3)/
	Moreover, for $I_\emptyset$-real $D$ \[  \chi(\overline{D^{I_{\emptyset}}})=\left\{
	\begin{array}{ll}
		0, & \text{if } d \text{ is even} \\
		1, & \text{if } d \text{ is odd.} \\
	\end{array} 
	\right. \]
	Lastly, for $I_{S^2}$-real $D$ $$d\geq\chi(\overline{D^{I_{S^2}}})\geq d+\sigma(X)\text{ with }\chi(\overline{D^{I_{S^2}}})\equiv d \quad (\text{mod 2}).$$
\end{theorem}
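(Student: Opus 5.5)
Our plan is to compute $\chi(\overline{D^{I}})$ directly from the stratification of $\overline{D^I}$ given by the diagram. We will use additivity of the Euler characteristic for decompositions into locally closed pieces, applied to the open levels, the critical levels and the two ends. Under the identification $\psi$, the real part of $\mathcal{S}$ is an $S^0$-bundle over $Q(\R)$, with fibre $\{[B]_{\R^*},[iB]_{\R^*}\}$.

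\emph{Open levels.} Over an open interval $(\gamma_j,\gamma_{j+1})$, the real part of $(\gamma_j,\gamma_{j+1})\times\mathcal{S}|_{C_j}$ is an interval times a double cover of the real curve $C_j(\R)$. Since $C_j(\R)$ is a disjoint union of circles, this stratum has $\chi=0$.

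\emph{Critical levels.} At a level $\gamma_j$, the real part of $Im(\sigma_j)$ is the part of the double cover of $Q(\R)\setminus(C_j\cup C_{j+2})$ selected by the sign of $f_j/f_{j+2}$. Over a region where the ratio is positive, exactly one sheet is real, namely $[B]$ in one component of the ambient space; over a region where it is negative, the other sheet $[iB]$ is real. After closure, each critical level therefore contributes one copy of $Q(\R)$ cut along $C_j\cup C_{j+2}$, with the regions distributed between the two components according to sign. Adding back the curves and their intersection points, each critical level contributes
\[\chi(Q(\R))-\chi(C_j(\R))-\chi(C_{j+2}(\R))+\#(C_j\cap C_{j+2})(\R),\]
plus a correction for the regions that glue to the open strata.

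\emph{Ends.} At $0$ and $+\infty$ we only have the end contributions. For odd $d$ this is the real plane coamoeba, contributing $\chi=1$ (an $\RP^2$ real locus, or a point). For even $d$ nothing appears at $0$.

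\emph{Case $I_\emptyset$.} Here $Q(\R)=\emptyset$, so every stratum except the coamoeba is empty. This gives $\chi=0$ for even $d$, and $\chi=\chi(\RP^2)=1$ for odd $d$, since the coamoeba is a plane.

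\emph{Case $I_{T^2}$.} Here $\chi(Q(\R))=0$ and each $\chi(C_j(\R))=0$. The Euler characteristic then reduces to a signed count of the real intersection points $C_j(\R)\cap C_{j+2}(\R)$, one term per critical level, together with the end term ($1$ if $d$ is odd, $0$ if even). Each such point is a transversal crossing of two circles on the torus. Locally, the surface near such a point is a saddle-type or cap-type piece depending on the signs of the four adjacent quadrants. So each real intersection point contributes $0$ or a fixed negative amount, $-2$ when the closure creates the "additional critical level" in both components. This gives the parity statement.

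The lower bound is attained when there are no real intersections. The other extreme, $\sigma(X)=-\tfrac{d(d^2-4)}{3}$, is attained when all $2j(j+2)\cdot 2$ complex intersection points of the bidegree $(j,j)$ and $(j+2,j+2)$ curves are real. Summing over $j$ and checking that the total maximal count reproduces $\sigma(X)$ is a routine polynomial identity.

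\emph{Case $I_{S^2}$.} Here $\chi(Q(\R))=2$, which contributes $2$ per critical level, split between the definite and indefinite sides. There are $k$ levels, together with the end $\RP^2$ or point, which gives the baseline $d$. Real intersections of the curves, and the changes of the curves' $\chi$, again shift the total by even amounts in the same way. This yields the range from $d$ down to $d+\sigma(X)$, with $\chi\equiv d \pmod 2$.

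The main obstacle is the local bookkeeping at the real intersection points of $C_j$ and $C_{j+2}$. We must determine precisely how the closure of $\sigma_j$ over the four adjacent sign regions attaches to the open strata $\mathcal{S}|_{C_j}$ and $\mathcal{S}|_{C_{j+2}}$, and check that each point changes $\chi$ by exactly the same even quantity. We would first treat a model neighbourhood with two transversal lines and all four sign patterns, and verify that the contribution depends only on the count and not on the signs. If the sign pattern does matter, then the bounds rather than an exact formula are what survive, which is consistent with the interval form of the theorem.
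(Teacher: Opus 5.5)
Your overall scheme is the paper's: additivity over strata, cylinders contributing $0$, and everything concentrated at the critical levels and the ends. The $I_\emptyset$ case is fine. The gap is the step you yourself flag, and the provisional answer you give there is wrong. A real point of $C_j\cap C_{j+2}$ does not contribute ``$0$ or $-2$ depending on the signs''. Two facts settle the bookkeeping.

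\emph{No sign choice at a crossing.} $\overline{f_j}$ and $\overline{f_{j+2}}$ vanish to first order along their smooth real zero curves, so $f_j/f_{j+2}$ changes sign across each curve. The four quadrants at a transverse crossing therefore always alternate: two opposite ones lie in the $\det>0$ sheet and two in the $\det<0$ sheet.

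\emph{The ``correction'' is both copies of the curves.} At the level $\gamma$ the closure contains the full real curves $C_j(\R)\cup C_{j+2}(\R)$ in \emph{both} sheets. Every arc locally separates a positive region from a negative one, and the cylinders over $C_j$ and $C_{j+2}$ attach there in both sheets. So the closed level is $\overline{R_+}\sqcup\overline{R_-}$, where, after projecting to $Q(\R)$, $\overline{R_+}\cup\overline{R_-}=Q(\R)$ and $\overline{R_+}\cap\overline{R_-}=C_j(\R)\cup C_{j+2}(\R)$. Hence the level contributes $\chi(Q(\R))+\chi(C_j(\R))+\chi(C_{j+2}(\R))-\#(C_j\cap C_{j+2})(\R)$. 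Equivalently, your open-region term must be supplemented by $2\chi(C_j(\R)\cup C_{j+2}(\R))$. This is exactly the paper's per-level formula (``the level and its reflection reconstruct $Q(\R)$''). Since real curves are unions of circles, each real intersection point contributes exactly $-1$, independently of signs, for $I_{T^2}$ and $I_{S^2}$ alike. This sits on top of the baseline $\chi(Q(\R))$, which is $0$ resp.\ $2$ per level.

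This changes both conclusions you draw from the local analysis.

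\emph{Parity.} It does not come from even local contributions. It comes from the number of real intersection points being even. That number is congruent mod $2$ to the intersection number, because the crossings are transverse and non-real ones come in conjugate pairs.

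\emph{The extremal count.} The intersection number is $j(j+2)+(j+2)j=2j(j+2)$, not $2j(j+2)\cdot 2$. With $-1$ per point and $0\le\#\le 2j(j+2)$ per level, summing over the levels gives $\chi\ge-\tfrac{d(d^2-4)}{3}=\sigma(X)$ for even $d$. For odd $d$ the maximal total count is $1-\sigma(X)$, so the end term $1$ gives $\chi\ge 1-(1-\sigma(X))=\sigma(X)$. The $I_{S^2}$ case is the same count with baseline $2$ per level, plus the $\RP^2$ end in odd degree. With your numbers the promised ``routine polynomial identity'' fails. For $d=3$ you would allow $\chi$ down to $1-2\cdot 12=-23$, while the correct count gives $1-6=-5=\sigma(X)$.
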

\begin{proof}
	Let us start with the calculations for $\chi(\overline{D^{I_{T^2}}})$ in the even degree case. We first observe that between any two consecutive critical levels $\gamma_j$ and $\gamma_{j+1}$ the real locus consists of cylindrical pieces whose base is the real smooth curve 
	\[
	C_{j+1}(\mathbb{R}) \subset \mathbb{RP}^1 \times \mathbb{RP}^1
	\]
	of bidegree $(2j+2,2j+2)$. Since these pieces are topologically cylinders, they do not contribute to the Euler characteristic. Therefore, for the computation of $\chi$, the essential contribution comes from the behavior at the critical levels themselves, together with the additional coamoeba contributions in the odd degree case.
	
	The general principle determining the computation at a critical level $\gamma_{j+1}$ is the following: the union of a critical level with its reflection in the other connected component of $PGL_2(\mathbb{R})$ reconstructs the real quadric (with the intersection of the curves counted twice)
	
	$$Q(\mathbb{R}) \cong \mathbb{RP}^1 \times \mathbb{RP}^1.$$
	
	From this we obtain, for $j>1$, a contribution of the form
	
	$$\chi\!\left(Q(\mathbb{R})\right)
	+\chi\!\left(C_j(\mathbb{R})\right)
	+\chi\!\left(C_{j+1}(\mathbb{R})\right)
	-\chi\!\left(C_j(\mathbb{R}) \cap C_{j+1}(\mathbb{R})\right).$$
	
	The case $j=1$ is exceptional: at the first critical level only the curve $C_1$ appears, and hence no additional contribution arises from reflection.
	Therefore, we get 
	
	$$\chi(X)=\sum_{j=0}^{k-1}\big(\chi(\mathbb{RP}^1\times\mathbb{RP}^1)+\chi(C_j)+\chi(C_{j+1})-\chi(C_j\cap C_{j+1})\big),$$
	where $C_j$ is a bidegree $(2j,2j)$ smooth curve in $\mathbb{RP}^1\times\mathbb{RP}^1$ for each $j>0$, and $C_0$ is the empty set. We may write $0\leq\chi(C_l\cap C_{l+1})\leq 2(2j)(2j+2)$. This leaves 
	$$0\geq \chi(X)\geq -\sum_{l=0}^{k-1}2(2j)(2j+2)=\dfrac{-8k^3+8k}{3}=\dfrac{-d^3+4d}{3}.$$	
	Notice that the lower bound is nothing but the claimed signature $\sigma(X)$ and together with the transversality condition, we may say that $\chi(C_l\cap C_{l+1})$ is always even. Therefore, we have the necessary count. 
	
	In the case  $d=2k+1$, a similar computation applies. The only difference is that one must additionally include the contributions coming from the coamoeba, namely one circle and one isolated point:
	$$\chi(X)=\chi(S^1)+\chi({\text\{point\}})+\sum_{l=0}^{k-1}\big(\chi(\mathbb{RP}^1\times\mathbb{RP}^1)+\chi(C_l)+\chi(C_{l+1})-\chi(C_l\cap C_{l+1})\big),$$
	where $C_l$ is a bidegree $(2l+1,2l+1)$ smooth curve in $\mathbb{RP}^1\times\mathbb{RP}^1$ for each $l$. We may write $0\leq\chi(C_l\cap C_{l+1})\leq 2(2l+1)(2l+3)$. This leaves 
	\begin{align*}
		1 \;\geq\; \chi(X)\geq\;
		1-\sum_{l=0}^{k-1} 2(2l+1)(2l+3)=\frac{-8k^3-12k^2+2k+3}{3}&=-\frac{(2k-1)(2k+1)(2k+3)}{3} \\
		&=-\frac{(d-2)d(d+2)}{3} \\
		&=-\frac{d(d^2-4)}{3}.
	\end{align*}	A similar argument as in the first part shows that we have the required formula again.
	
	The computation of $\chi(\overline{D^{I_{\emptyset}}})$ follows from the fact that $\overline{D^{I_{\emptyset}}}=\emptyset$ in the even degree, while coamoeba effect contributes only a single real plane in the odd degree case.
	
	For the last inequality $d\geq\chi(\overline{D^{I_{S^2}}})\geq d-\sigma(X)$, we may compare it with the previous computations of $\chi(\overline{D^{I_{S^2}}})$.
	
	At each critical level, the contributions of $\chi(\RP^1\times\RP^1)$ are replaced by non-trivial contributions of $\chi(S^2)$. Since $\chi(S^2)=2,$ this produces an additional  $2k$ contribution to $\chi(\overline{D^{I_{S^2}}})$. Moreover, in the odd case, a circle is replaced by a copy of $\RP^2$, contributing an additional  $\chi(\RP^2)=1$. Therefore, in both cases  $d=2k$ or $d=2k+1$, the total additional contribution equals $d$. This establishes the claimed inequalities.
	
\end{proof}

As a remark, we add that all the above values of the Euler characteristic are realizable, this, among other things, will be explained in detail in our next article. 

\begin{theorem}
	$\overline{D^{I}}$ is a topological surface for $I\in\{I_{T^2},I_\emptyset,I_{S^2}\}$ and $D$ being $I$-real.
\end{theorem}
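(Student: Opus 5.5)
The plan is to check local Euclideanity point by point, using the stratification of $\overline{D^I}$ that comes from the diffeomorphism $\psi$ and the level structure of the primitive diagram. For each real structure there are four kinds of strata: the open cylindrical pieces $(\gamma_j,\gamma_{j+1})\times\mathcal{S}|_{C_j}$ restricted to real points; the open parts of the critical levels $\{\gamma_j\}\times Im(\sigma_j)$; the ``corner'' loci where a critical level meets the cylinders over $C_j\cup C_{j+2}$; and, for odd degree, the junction of the spherical coamoeba at level $0$ with $(0,\gamma_1)\times C_1$. I would also treat the ends, meaning the closure at $\alpha=+\infty$ inside $Q_2$ and the boundary of the real quadric.

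\textbf{Routine strata.} Over the real locus, $\mathcal{S}$ is an $S^0$-bundle, with fibre $\{[B]_{\R^*},[iB]_{\R^*}\}$. Since $C_j$ is smooth, its real part $C_j(\R)$ is a smooth $1$-manifold in $Q(\R)$, where $Q(\R)$ is $T^2$, $S^2$ or empty. Hence $(\gamma_j,\gamma_{j+1})\times\mathcal{S}|_{C_j(\R)}$ is a $2$-manifold, a disjoint union of cylinders. On a critical level, a real point of $Im(\sigma_j)$ requires $f_j/f_{j+2}$ to take the sign matching the chosen sheet. So the level contributes, in each of the two sheets, the closure of the regions of $Q(\R)\setminus(C_j\cup C_{j+2})$ of the appropriate sign, and these are open $2$-dimensional domains. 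Together, the two sheets glue to a copy of $Q(\R)$ cut along the curves, as already used in the proof of Theorem~\ref{theorem_ec}.

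\textbf{Junctions, the main step.} I would then analyse a neighbourhood of a point $(\gamma_j,p)$ with $p\in C_j(\R)$ and $p\notin C_{j+2}$. As $B\to p$, the section $\sqrt{f_j/f_{j+2}}\,B$ degenerates in a prescribed way. In the local coordinate $x$ transverse to $C_j$, the sign of $f_j$ flips across $C_j$, so the level region lies on one side in one sheet and on the other side in the other sheet. The lower cylinder over $C_j$ on $(\gamma_{j-1},\gamma_j)$ therefore arrives and turns into the two half-planes of the level, one in each sheet. A model computation, using the local form $f_j\approx x$ and the phase-tropical picture near the level (the one-variable tropical root of multiplicity $2$), should show that the closure is locally a half-cylinder glued to two half-planes along a line in a way homeomorphic to $\R^2$. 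I would do this explicitly with the real version of $\operatorname{VAL}$ for the binomial $\det(B)^2f_j+f_{j+2}t^{\beta}$, where the fibre points $\pm$ meet the two signs. At a transverse intersection point $p\in C_j(\R)\cap C_{j+2}(\R)$, a local product model $f_j=x$, $f_{j+2}=y$ gives four quadrants with alternating sign. Cylinders attach from below along $x=0$ and from above along $y=0$, and I expect the check to yield a $4$-valent picture whose link is a single circle. I expect this crossing analysis to be the main obstacle, because the link must be verified to be connected rather than two circles, and the sign bookkeeping across the two components of $PGL_2(\R)$ is exactly what makes it work.

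\textbf{Remaining cases.} The end at $\alpha\to\infty$ closes the top cylinders over $C_k(\R)$ onto $C_k(\R)\subset Q(\R)$, where the two sheets meet. This is a collar-gluing, hence a manifold. For odd degree I would use the explicit description of coamoebas of real planes. For $I_{T^2}$ and $I_{S^2}$, the real part of the exterior of the quadratic cone is bounded by a circle (respectively by $\RP^2$), and I would check that the bottom cylinder over $C_1(\R)$, a $(1,1)$ curve, attaches along it; this is the step that needs ``special care'', as the text says. For $I_\emptyset$ the closure is empty or a plane $\RP^2\subset PSU(2)$, which is immediate.
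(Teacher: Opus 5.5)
Your overall route is the paper's. The cylinders between levels are manifolds, interior points of the level regions are manifold points, and the real work is at a transverse crossing $p\in C_j(\R)\cap C_{j+2}(\R)$. There the alternating signs put two \emph{opposite} quadrants into each sheet, so quarter-disk, upper half-cylinder, opposite quarter-disk and lower half-cylinder close up into one circle. That is exactly the paper's $\alpha_1,\dots,\alpha_4$ argument. The check is immediate rather than an obstacle: the lower cylinder joins the rays $\pm y$ and the upper one joins $\pm x$, so opposite quadrants always give a single $4$-cycle. Adjacent quadrants would give a trivalent vertex, never two circles. No model computation with $\operatorname{VAL}$ is needed either, since the diagram is taken as given.

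Two of your local models, however, are wrong as stated.

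\textbf{Junction at a single curve.} At a point of $C_j(\R)\setminus C_{j+2}$ you describe ``a half-cylinder glued to two half-planes along a line'', but three half-planes along a line is not a manifold. The two sheets $[B]_{\R^*}$ and $[iB]_{\R^*}$ lie in different components of the complement of $Q(\R)$: opposite signs of $\det$ for $I_{T^2}$, definite versus indefinite for $I_{S^2}$. They meet only along $Q(\R)$ at $\alpha=\infty$. So near a point at a finite level only one sheet is present, and the correct model is one half-plane of the level plus one half-cylinder.

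\textbf{Odd-degree ends, $I_{T^2}$.} Your treatment fails here. The real part of $PSU(2)$ is $PO(2)$, which is two circles. So there is no two-dimensional real coamoeba ``bounded by a circle'' to glue the bottom cylinder to, and an injective gluing along a circle would leave boundary. Write real rank-one matrices as $uv^{T}$, with $u,v$ of angles $\phi,\theta$. At $\alpha=0$ the two sheets of the cylinder over the $(1,\pm1)$-circle $C_1(\R)$ map to the two core circles by $(\phi,\theta)\mapsto\phi\mp\theta$. One of these maps has degree $0$ and collapses or folds the circle onto a point or an arc, so the closure there is a disk. The other is a double cover, so the closure there is a M\"obius band. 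For $d=1$ this gives $\RP^2$, as it should.

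\textbf{Odd-degree ends, $I_{S^2}$.} The definite end is the single point $[I]$, onto which the whole definite cylinder collapses, giving a cone, i.e.\ a disk. Only at the indefinite end does the cylinder attach, through the quotient $S^2\to\RP^2$, along the image of $C_1(\R)$, which bounds the real coamoeba. When $C_1(\R)=\emptyset$ the coamoeba is all of $\RP^2$.

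These are the ``point or $S^1$ or $\RP^2$'' of the paper's proof. These non-injective attachments are what you actually need to check at the ends.
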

\begin{proof}
For the real structure $I_{\emptyset}$, there is nothing to prove by the above discussion. For the other cases, since the regions between the critical levels are cylindrical, it suffices to analyze what happens at the critical levels, and in the case where $d$ is odd, also at the two ends. In the latter case, we may have a point or $S^1$ or $\RP^2$ but in each of these have cylindrical neighborhoods of those in $\overline{D^I}$.

Let us take a real point $[B]_{\R^*}\in\overline{D^I}$ from a critical level $\gamma_l$. This element lies in one of the connected components of the critical level. If it does not belong to the boundary, which is contained by $C_l\cup C_{l+1}$, then it is an interior point of the connected component. In this case, there exists a neighborhood of $[B]_{\R^*}$ that is entirely contained in $D^{I}$.

Hence, the only nontrivial situation occurs when $[B]_{\C^*}$ lies on the boundary. We may further assume that $[B]_{\C^*}$ is a transverse intersection point of the two curves $C_l$ and $C_{l+1}$, since the remaining boundary cases are relatively straightforward.

Now consider a path $\alpha_1$ starting from a point on $C_l$ to a point on $C_{l+1}$ in such a way that it locally covers a quarter disk around $[B]_{\mathbb{R}^*}$ in $\overline{D^I}$. From its endpoint, we may choose another path $\alpha_2$ around $[B]_{\mathbb{R}^*}$ on the cylinder, that is, on the real locus of $(\gamma_j,\gamma_{j+1}) \times \mathcal{S}\big|_{C_{j+1}}$, ending at a point on $C_{l+1}$ at the critical level $\gamma_j$.

Similarly, at the same critical level, we can obtain another quarter disk at a point opposite to the starting point with respect to $[B]_{\R^*}$ by using the path $\alpha_3$. Finally, through a path $\alpha_4$ on the cylinder between the critical levels $\gamma_{j-1}$ and $\gamma_j$ that surrounds a half disk, we can return to our starting point. In this case, the region bounded by $\alpha_1 \cup \alpha_2 \cup \alpha_3 \cup \alpha_4$ in $\overline{D^I}$ gives a Euclidean neighborhood of the point $[B]_{\R^*}$.
\end{proof}
To determine whether $\overline{D^{I_{T^2}}}$ is connected, it is sufficient to understand what happens at the critical levels. In the case $d = 3$, there is only one critical level $\gamma_1$. Suppose we take two points $[B_1]_{\R^*}$ and $[B_2]_{\R^*}$ lying in two different connected components at the critical level $\gamma_1$. Starting from the connected component containing $[B_1]_{\R^*}$, we pass through the cylinder over a branch, say $C_{3,1}$ of $C_3$ on the boundary of that component, to the reflection level, and from there we use a connected component at that level to reach the boundary of the component containing $[iB_2]_{\R^*}$. Lastly, we return to $\gamma_1$ in a similar way and construct a path to $[B_2]_{\R^*}$. 
\begin{remark}\label{remark_connectT^2}
$\overline{D^{I_{T^2}}}$ is connected for $d=3$.
\end{remark}
If we consider curves $C_1$ and $C_3$ with empty $I_{S^2}$-real loci to construct $\overline{D^{I_{S^2}}}$, we have $S^2$ at $\gamma_1$, $\emptyset$ at its reflection, and $\RP^2$ at the right end of the cone. Therefore, we have the following remark.
\begin{remark}\label{remark_ec}
 For $d=3$,   $\overline{D^{I_{S^2}}}=S^2\sqcup\RP^2$ with $\chi(\overline{D^{I_{S^2}}})=3.$
\end{remark}
Let us check the topological surfaces that can be constructed by our diagrams for small degrees. With empty initial real curves, $\overline{D^{I_{S^2}}}$  is basically  an $\RP^2$ for $d=1$ and a connected space with $\chi=2$ for $d=2$. $\overline{D^{I_{T^2}}}$ is an $\RP^1\times\RP^1$ for $d=2$

The Euler characteristic $\chi(\overline{D^{I_{T^2}}})$ can take any value from the set $\{1,-1,-3,-5\}$ mentioned in Theorem \ref{theorem_ec}, since it is easy to construct smooth bi-degree $(1,1)$ and $(3,3)$ curves in $\RP^1\times \RP^1$ having 2,3,4 or 6 transverse intersection points. Combining these with Remarks \ref{remark_connectT^2} and \ref{remark_ec}, we have the following result:
\begin{remark}
All isotropy types of smooth surfaces in $\RP^3$ up to degree 3 are realizable by $PGL_2$ patchworking. \end{remark}

\section{Discussion}
We would like to first address perhaps the most immediate questions that the reader might have at this point, namely on the status of our construction and why we had put it forward at this stage. On the one hand, we are quite confident that the main
conjecture is true, i.e. everything produced by the primitive $PGL_2$ patchworking  indeed corresponds to an isotopy type of a smooth real algebraic surface in $\RP^3$. One reason for its credibility is a direct verification in small degrees which can easily be replicated, especially since we made a particular effort to make the construction transparent and ready to apply for anyone working in such fields as real algebraic geometry or low-dimensional and geometric topology. Although we have several potential schemes of how to derive the conjecture, the
complete proof is still missing, and it might take a considerable time
for it to manifest, and, from the more reserved cautious perspective, it might
turn out that the statement is, after all, false, which may come out by providing
a concrete diagram degree $d$ primitive PGL2 diagram not corresponding to any istotopy type of a real algebraic surface of degree $d$, we therefore invite the reader to try doing so.

Moreover, such  efforts at the early stage of development of this branch of the subject may lead to finding previously unknown isotopy types (recall that starting from degree five we do not even know what is the possible number of connected components of a surface), and provide improved asymptotic lower bounds on individual Betti numbers for large degrees. One of the reasons in favor of the feasibility of such attempts is due to an apparently very different nature of our construction compared to the iconic Viro's combinatorial patchworking -- in what we offer, there is practically no combinatorics involved, and, instead we are facing the dimension reduction to the problem of relative positions of smooth transversally intersecting curves on real quadric surfaces, which by no means being easy in general, seems still to be more accessible than spatial geometry of surfaces.

We tried to convince the reader that our construction has some other advantages, namely the Euler characteristic of the primitive $PGL_2$ patchworking may vary for the fixed degree, which is a stark contrast with Viro's primitive patchworking being constrained by Itenberg's theorem stating that its Euler characteristic is always equal to the signature of the corresponding complex surface. A fair question with respect to that is whether we are not shifting the terminology to produce this
distinction artificially. The answer to it relies on understanding what is the geometric meaning of the combinatorial primitivity condition: if one is given a classical tropical hypersurface, then its dual subdivision of the Newton polyhedron is primitive if and only if for every family of complex hypersurfaces, its generic member is smooth, and, moreover, is homeomorphic to the phase tropical hypersurface fibered over the tropical hypersurface that we started with, as theorems of Kerr and Zharkov, and of Kim and Nisse suggest. In our situation, the notion of primitivity is derived from the extrapolation of this principle to the $PGL_2$ case: we expect it to eventually be described by the general motto “primitive phase tropicalization restores the topological type.”

\section*{Acknowledgements}

Turgay Akyar acknowledges ICMS-Sofia and Ernesto Lupercio for providing a supportive research environment. Mikhail Shkolnikov would like to thank Grigory Mikhalkin, Peter Petrov, Ilia Zharkov and Andrei Bengu\textcommabelow s-Lasnier for invaluable discussions.

This work is supported by the Simons Foundation, grant [SFI-MPS-T-Institutes-00007697],
the Ministry of Education and Science of the Republic of Bulgaria,
grant [DO1-239/10.12.2024], as well as by the National Science Fund,
The Ministry of Education and Science of the Republic of Bulgaria, under contract [KP-06-N92/2] for the second author.

{\centering
Institute of Mathematics and Informatics\\
Bulgarian Academy of Sciences\\
Akad. G. Bonchev, Sofia 1113, Bulgaria \\\vspace{5pt} 

Department Of Mathematics, \\Middle East Technical University, \\06800 Ankara, Türkiye\\\vspace{5pt}

t.akyar[at]math.bas.bg\\\vspace{5pt}

Institute of Mathematics and Informatics\\
Bulgarian Academy of Sciences\\
Akad. G. Bonchev, Sofia 1113, Bulgaria \\\vspace{5pt}

m.shkolnikov[at]math.bas.bg\\ 
}
\end{document}